\newtheorem{definition}{Definition}
\newtheorem{assumption}{Assumption}
\newtheorem{theorem}{Theorem}
\newtheorem{remark}{Remark}
\newtheorem{lemma}{Lemma}
\newenvironment{breakablealgorithm}
{
		\begin{center}
			\refstepcounter{algorithm}
			\hrule height.8pt depth0pt \kern2pt
			\renewcommand{\caption}[2][\relax]{
				{\raggedright\textbf{\ALG@name~\thealgorithm} ##2\par}%
				\ifx\relax##1\relax 
				\addcontentsline{loa}{algorithm}{\protect\numberline{\thealgorithm}##2}%
				\else 
				\addcontentsline{loa}{algorithm}{\protect\numberline{\thealgorithm}##1}%
				\fi
				\kern2pt\hrule\kern2pt
			}
		}{
		\kern2pt\hrule\relax
	\end{center}
}
\title{\LARGE \bf
	Detailed Derivations of ``A Privacy-Preserving  Finite-Time Push-Sum based Gradient Method for Distributed Optimization over Digraphs''}
\author{Xiaomeng Chen$^{1}$, \thanks{$^{1}$X. Chen and L. Shi are with the Department of Electronic and Computer Engineering, Hong Kong University of Science and Technology, Clear Water Bay, Kowloon, Hong Kong (email: xchendu@connect.ust.hk,  eesling@ust.hk).}  
	Wei Jiang$^{2}$, \thanks{$^{2}$W. Jiang e-mail: wjiang.lab@gmail.com }
Themistoklis Charalambous$^{3}$\thanks{$^{3}$T. Charalambous is with the Department of Electrical and Computer Engineering, School of Engineering, University of Cyprus, Nicosia, Cyprus.  He is also a Visiting Professor at Aalto University, Espoo, Finland, and FinEst Centre for Smart Cities, Tallinn, Estonia. (e-mail:
		charalambous.themistoklis@ucy.ac.cy). },  \IEEEmembership{Senior Member, IEEE},
 and Ling Shi$^{1}$, \IEEEmembership{Fellow, IEEE}}
\begin{document}
	\maketitle
	\thispagestyle{empty}
	\pagestyle{empty}
	
	\begin{abstract}
		This paper addresses the problem of distributed optimization, where a network of agents represented as a directed graph (digraph) aims to collaboratively minimize the sum of their individual cost functions. Existing approaches for distributed optimization over digraphs, such as Push-Pull, require agents to exchange explicit state values with their neighbors in order to reach an optimal solution. However, this can result in the disclosure of sensitive and private information. To overcome this issue, we propose a state-decomposition-based  privacy-preserving finite-time push-sum (PrFTPS) algorithm without any global information,  such as network size or graph diameter. Then, based on PrFTPS,  we design a gradient descent algorithm (PrFTPS-GD) to solve the distributed optimization problem.  It is proved that under PrFTPS-GD,  the privacy of each agent is preserved and the linear convergence rate related to the optimization iteration number is achieved.   Finally, numerical simulations are provided to illustrate the effectiveness of the proposed approach.\end{abstract}
	
	\begin{keywords}Distributed optimization, privacy-preserving, finite-time consensus, directed graph. 
		
	\end{keywords}
	\section{INTRODUCTION}
	In this paper, we consider an optimization problem in a multi-agent system of $n$ agents. Each agent $i$ has a private cost function $f_i$, which is  known to itself only. All the  agents aim to collaboratively solve the following optimization problem

	\begin{equation}\label{pro1}	
		\min\limits_{x\in\mathbb{R}^{p}}F(x) :=\sum_{i=1}^{n}f_{i}(x),
	\end{equation}
	where $x$  is the global decision variable. The agents are connected through a communication graph and can only transmit messages to their neighbors. By local computation and communication, each agent seeks a solution that minimizes the sum of all the local objective functions. Such a distributed paradigm facilitates breaking large-scale problems into  sequences of smaller ones. That is why it has  been widely adopted in several applications, such as  power grids~\cite{braun2016distributed}, sensor networks \cite{dougherty2016extremum} and vehicular networks \cite{mohebifard2018distributed}.
	
	{\color{black}To solve problem \eqref{pro1}, decentralized gradient descent (DGD)  is the most commonly used algorithm, requiring diminishing stepsizes to ensure optimality \cite{nedic2009distributed}. To overcome the challenge of slow convergence caused by diminishing stepsizes, Xu et al.\cite{xu2015augmented} adopted the dynamic average consensus \cite{kia2019tutorial} to propose a gradient tracking (GT) method with a constant stepsize. Recently, Xin  et al. \cite{xin2018linear} and Pu et al. \cite{pu2020push}  devised a modified GT algorithm called AB/Push-Pull algorithms for  distributed optimization, which can be applied to a general digraph.  {\color{black}A comprehensive survey on distributed optimization algorithms is provided by Yang et al. \cite{yang2019survey}. }}
	\\ \indent The aforementioned distributed algorithms share state values in each iteration, which can compromise the privacy of   agents if they have private information. By hacking into communication links, an adversary could potentially  access  to transmitted  messages among agents and potentially gather  private information  using an inferring algorithm. Mandal\cite{mandal2016privacy} presented theoretical analysis of privacy disclosure in distributed optimization, where the parameters of cost functions and generation power can be correctly inferred by an adversary. As the number of privacy leakage events is increasing, there is an urgent need to preserve privacy of each agent in distributed systems. 
	
	Recently, many results have been reported on the topic of privacy-preserving distributed optimization. One commonly used approach is differential privacy (DP) \cite{dwork2006calibrating} due to its rigorous mathematical framework, proven privacy preservation properties and ease of  implementation \cite{nozari2017differentially}. 
	However, DP-based approaches face a fundamental trade-off between privacy and accuracy, which may result in suboptimal solutions \cite{huang2015differentially}. To address this challenge, Lu et al. \cite{lu2018privacy} combined distributed optimization methods with partially homomorphic encryption. Nonetheless, this approach has limitations due to high computation complexity and communication costs. {\color{black}To overcome these limitations and achieve accurate results, Wang \cite{wang2019privacy} proposed a privacy-preserving average consensus using a state decomposition mechanism that divides the state of a node into two sub-states. }
	
It is worth noting that none of the aforementioned approaches  is suitable  for agents over digraphs. {\color{black}To preserve privacy of nodes interacting on a digraph, Charalambous et al. \cite{charalambous2019privacy} proposed an offset-adding privacy-preserving push-sum, and Gao et al. \cite{gao2018privacy} protected privacy by adding randomness on edge weights, both of which are only effective against honest-but-curious nodes (see Definition \ref{df3}). To improve resilience to external eavesdroppers (see Definition \ref{df2}), Chen et al. \cite{chen2020privacy} extended the state decomposition mechanism to digraphs and introduced an uncertainty-based privacy notion. In terms of   privacy-preserving distributed optimization   over digraphs, Mao  et al. \cite{mao2020privacy} designed a privacy-preserving algorithm based on the push-gradient method with a decaying stepsize, which lacked a formal privacy notion.}  Wang and Nedi{\'c}  \cite{wang2022tailoring} designed a DP-oriented gradient tracking based algorithm (DPGT) that can ensure both differential privacy and optimality. However, it adopted a diminishing stepsize to ensure convergence, resulting in a slow convergence rate.
	To speed up the convergence, Chen et al. \cite{chen2021differential} proposed a state-decomposition-based push-pull (SD-Push-Pull) algorithm, which guarantees both linear convergence and differential privacy for digraphs.  Nevertheless, SD-Push-Pull only converges to a suboptimal value.
	
	\indent Inspired by  recent results that privacy can be enabled
	in consensus over digraphs by state decomposition \cite{chen2020privacy} and that finite-time push-sum can be used in distributed optimization to deliver the optimal solution \cite{jiang2022fast}, {\color{black}this paper presents a novel PrFTPS algorithm that accurately computes the average value for digraphs in a finite time, as opposed to the asymptotic average consensus achieved in \cite{chen2020privacy}. Then, combined with gradient decent,    PrFTPS-GD is proposed to solve problem \eqref{pro1} allowing each node in a digraph to achieve optimal value linearly while preserving its privacy. The main contributions of this paper are summarized as follows:
	
	\begin{enumerate}
	{\color{black}	
		\item We propose PrFTPS (\textbf{Algorithm \ref{alg1}}) based GD algorithm (\textbf{Algorithm \ref{alg2}})  to solve problem \eqref{pro1} over digraphs. Moreover, we show that  Algorithm \ref{alg1} can compute the exact average value  in finite time and  Algorithm \ref{alg2} guarantees the linear convergence to the optimal value of problem \eqref{pro1} (\textbf{Theorem 1}). 
		\item We analyze the privacy-preserving performance of PrFTPS-GD against  honest-but-curious nodes and eavesdroppers (\textbf{Theorem 2}). Specifically, we adopt the uncertainty-based privacy notion \cite{chen2020privacy} and show that the adversary has infinite uncertainty about  agents' private information under certain topological conditions. 
		\item PrFTPS-GD performance is evaluated via 
simulations and compared with   other state-of-the-art privacy-preserving approaches  (e.g., \cite{wang2022tailoring,chen2021differential}) over digraphs.
It is shown that our approach apart from adopting an easily tuned constant stepsize (unlike the diminishing stepsize in \cite{wang2022tailoring}), it computes the optimal solution instead of the suboptimal one in \cite{chen2021differential}. }      
	\end{enumerate}
	
	{\color{black}
	\textit{Notations:}  In this paper,   $\mathbb{R}^n$ and $\mathbb{R}^{n\times p}$ represent the set of $n$ dimensional vectors and $n\times p$ dimensional matrices. {\color{black}$\mathbb{Z}_{++}$ denotes the set of positive integers.} $\mathbf{1}_n\in\mathbb{R}^n$, $\mathbf{I}_n\in\mathbb{R}^{n\times n}$ and  $\mathbf{0}_n\in\mathbb{R}^{n\times n}$ represent the vector of ones,  the identity matrix and the zero matrix, respectively. For an arbitrary vector $\bm{x},$ we denote its $i$th element by $\bm{x}_i$. For an arbitrary matrix $\bm{M},$ we denote its  element in the $i$th row and $j$th column by $[\bm{M}]_{ij}$. $\otimes$ denotes the Kronecker product. The spectral radius of matrix $\mathbf{A}$ is denoted by $\rho(\mathbf{A})$. Matrix  $\mathbf{A}$ is called row-stochastic if the sum of each row equals to $1$ and the entries of $\mathbf{A}$ are non-negative. Similarly, matrix  $\mathbf{A}$ is called column-stochastic if the sum of each column equals to $1$ and the entries of $\mathbf{A}$ are non-negative.
	\section{PRELIMINARIES AND PROBLEM STATEMENT}
	\subsection{Network Model}
	We consider a digraph $\mathcal{G}\triangleq (\mathcal{V}, \mathcal{E})$ with $n$ nodes, where  the set of nodes and edges are  $\mathcal{V}=\{1,\ldots,n\}$ and $\mathcal{E}\subset \mathcal{V} \times \mathcal{V}$, respectively. A communication link from node $i$ to node $j$ is denoted by $\varepsilon_{ji}=(j,i)\in  \mathcal{E}$, indicating that node $i$ can send messages to node $j$. The nodes who can  send messages to node $i$ are denoted as in-neighbours of node $i$ and the set of these nodes is denoted as $N_i^{-}=\{j\in \mathcal{V}\mid \varepsilon_{ij}\in \mathcal{E}\}$. Similarly, the nodes who can  receive messages from node $i$ are denoted as out-neighbours of node $i$ and the set of these nodes is denoted as $N_i^{+}=\{j\in \mathcal{V}\mid \varepsilon_{ji}\in \mathcal{E}\}$. The
	cardinality of $N_j^{+}$ , is called the out-degree of node $j$ and is denoted as $\mathcal{D}^+_j=|N_j^{+}|$. A digraph is called strongly connected if there exists at least one directed path from any node $i$ to any node $j$ with $i\neq j$.

	\subsection{Push-Sum Algorithm}
	
	The push-sum algorithm, introduced originally in \cite{kempe2003gossip}, aims at achieving average consensus for each node communicating over a digraph which satisfies the following assumption. 
			 \begin{assumption}\label{asp1}
			The digraph $\mathcal{G}$ is assumed to be strongly connected. 		
	\end{assumption}
	
	Consider a network of $n$ nodes, where each node has a private initial state, {\color{black}termed as  $x_i(0)$.} The push-sum algorithm introduces \textit{two auxiliary varaibles}, $x_{i,1}(k)$ and $x_{i,2}(k)$, and assumes the out-degree is known for each node. The details are as follows: for each node $i$, 	$$
	x_{i,l}(k+1)=\sum\limits_{j\in N^{-}_i\cup\{i\}}p_{ij}(k)x_{j,l}(k),\qquad  k\ge 0,    
	l=1,2,
	$$
	where $p_{ij}(k)=1/(1+\mathcal{D}^+_j), \forall i \in N_j^{+} \cup \{j\}$ and $x_{i,1}(0)=x_i(0), x_{i,2}(0)=1$ for $i \in \mathcal{V}$. \\
	\textbf{Proposition 1.} \cite{kempe2003gossip} \textit{If a digraph $\mathcal{G}(\mathcal{V}, \mathcal{E})$ with $n$ nodes satisfies Assumption \ref{asp1}, then  the ratio $r_i(k):=x_{i,1}(k+1)/ x_{i,2}(k+1)$ asymptotically converges to the average of the initial values, i.e.,  we have}
$$
		\lim\limits_{k \rightarrow \infty} r_i(k)=\frac{\sum_{i\in \mathcal{V}}x_i(0)}{n}, \forall i \in \mathcal{V}. 
$$

	\subsection{Information Set and Privacy Inferring Model}}
	{\color{black}Before defining privacy, we first introduce the privacy inferring model. The adversary set $\mathcal{A}$ is assumed  to obtain some online data by eavesdropping on some edges $\mathcal{E}_r \subseteq \mathcal{E}$ and nodes $\mathcal{V}_a  \subseteq \mathcal{V}$. 	The information set accessible to  $\mathcal{A}$ at time $k$  is denoted as $\mathcal{I}_\mathcal{A}(k)$, which contains all transmitted information accessible to   $\mathcal{A}$.
	
		Then all the information accessible to $\mathcal{A}$ at  time iteration $K$ is denoted as $\mathcal{I}_\mathcal{A}(0:K)\triangleq\{\mathcal{I}_\mathcal{A}(0),\mathcal{I}_\mathcal{A}(1),\ldots,\mathcal{I}_\mathcal{A}(K)\}$. 
		
		With the above  model, we adopt an uncertainty-based notion of privacy, which is proposed in \cite{chen2020privacy}. Denote the private information of node $i$ as $x_{p,i} \in \mathbb{R}^p$  and define a set $\Delta\mathcal{I}_\mathcal{A}({\color{black}x_{p,i}})$ as
		$$\Delta\mathcal{I}_\mathcal{A}({\color{black}x_{p,i} })=\{\bar x_{p,i} \mid \text{the adversary's information set } \hspace*{-1mm}= \mathcal{I}_\mathcal{A}(0:K)\},
		$$
		which contains all possible states that can correspond to $x_{p,i}$ when
		the information set accessible to $\mathcal{A}$ is $\mathcal{I}_\mathcal{A}(0:K)$. 
		
		The diameter of $\Delta\mathcal{I}_\mathcal{A}(x_{p,i})$ is defined as 
		$$
			\text{Diam}\{\Delta\mathcal{I}_\mathcal{A}(x_{p,i})\}=\sup\limits_{\bar x_{p,i}, \bar x_{p,i}'\in\ \Delta\mathcal{I}_\mathcal{A}(x_{p,i})} |\bar x_{p,i}-\bar x_{p,i}'|,
		$$
		where $\bar x_{p,i}$ and $\bar x_{p,i}'$ are two different states that belong to set $\Delta\mathcal{I}_\mathcal{A}(x_{p,i})$.

		{   \begin{definition}\label{dfprivacy}
				{\color{black}The privacy of $x_{p,i}$} is preserved against  $\mathcal{A}$ if $\text{Diam}\{\Delta\mathcal{I}_\mathcal{A}(x_{p,i})\}=\infty$.
		\end{definition}}

		In this paper, we consider distributed optimization problems, where local objective gradients usually carry sensitive information. For example, in distribited-optimization-based  localization and rendezvous, directly exchanging the gradient of an agent leads to disclosing its  position \cite{huang2015differentially}. Recent work shows that gradients are directly calculated from and embed sensitive information of training  learning data \cite{zhu2019deep}. {\color{black}Hence, the private information is the \textit{gradient of each agent at all time iteration.} Then, we define the privacy preservation of each agent as follow.
		
		\begin{definition}\label{pr}
		For a network of $n$ agents in distributed
optimization, the privacy of agent $j$ is preserved against $\mathcal{A}$ if the privacy of  its gradient value $\nabla f_j(x_j)$  evaluated at any point $x_j$ is preserved.
		\end{definition} }
		

		We consider two types of adversaries, defined as follows. 
		\begin{definition}\label{df3}
			An  honest-but-curious adversary is a node or a group of nodes which knows the network topology and follows the system's  protocol, attempting to infer the private information of other nodes.
					\end{definition}
		\begin{definition}
			\label{df2}
			An  eavesdropper is an external adversary who has the knowledge of network topology, and is able to eavesdrop on a portion of coupling weights and transmitted data.\end{definition}
		
		
		
\section{Main results}
		
		In this section,  we first  propose a  privacy-preserving finite-time push-sum algorithm via a state decomposition mechanism.   Then, we adopt the proposed  privacy-preserving approach to address  problem \eqref{pro1} based on a gradient descent approach.
				\subsection{Privacy-Preserving Finite-Time Push-Sum Algorithm}

		The main idea of our privacy-preserving approach is a state
decomposition mechanism.

		\textit{Decomposition Mechanism:} Let each node decompose its state $x_{i,l}(k)$ into two substates $x^\alpha_{i,l}(k)$ and $x^\beta_{i,l}(k)$, $l=1,2$. The initial values $x^\alpha_{i,l}(0)$ and $x^\beta_{i,l}(0)$ can be randomly chosen from the set of all real numbers under the following constraint 
		\begin{equation*}
				x_{i,1}^\beta(0)+x_{i,1}^\alpha(0)=2x_{i}(0), x_{i,2}^\alpha(0)=0, x_{i,2}^\beta(0)=2, \forall i \in \mathcal{V},
		\end{equation*}
		where $x_i(0)$ denotes the private initial state of node $i$.

		Under the state decomposition mechanism, the overall dynamics become
		\begin{equation}
			\label{e1}
			\left\{  
			\begin{aligned}
				&x_{i,l}^\alpha(k+1)=\sum\limits_{j\in N^{-}_i\cup\{i\}}p_{ij}(k) x^\alpha_{j,l}(k)+ a_i^{\alpha,\beta}(k)x^\beta_{i,l}(k),\\
				& x^\beta_{i,l}(k+1)=a_i^{\beta,\alpha}(k)x^\alpha _{i,l}(k)+a_i^{\beta,\beta}(k)x^\beta_{i,l}(k),   
			\end{aligned}
			\right. 
		\end{equation}with $i\in \mathcal{V}$, $l=1,2$. In this decomposition scheme, the substate  $x^\alpha_{i,l}(k)$ is exchanged with other nodes while $x^\beta_{i,l}(k)$ is never shared with other nodes. The coupling weights between the two substates $x^\alpha_{i,l}(k)$ and $x^\beta_{i,l}(k)$ are asymmetric and denoted as $a_i^{\alpha,\beta}(k)$ and $a_i^{\beta,\alpha}(k)$. The update weights for substate $x^\beta_{i,l}(k)$ is denoted as  $a_i^{\beta,\beta}(k)$. The outgoing link weight from agent $i$ to agent $j$ is denoted as $p_{ji}(k)$.  
		These are  design parameters and will be designed  in the following weight mechanism (Section \ref{wm}).
		
		We next introduce  details of the weight mechanism to enable  algorithm convergence and privacy preservation.
		\vspace*{2mm}
		\subsubsection{Weight mechanism}\label{wm} For $k=0,\forall i \in \mathcal{V}$,  we set $a_i^{\beta,\beta}(0)=0, a_i^{\alpha,\beta}(0)=1$ and $ p_{ji}(0)=0,  \forall j \notin N_i^{+}$.  Also, we allow  $p_{ji}(0), \forall j \in N_i^{+}\cup \{i\}$ and
		$a_i^{\beta,\alpha}(0)$ to be arbitrarily chosen from the set of all real numbers under the constraint $$\sum_{j=1}^n p_{ji}(0)+a_i^{\beta,\alpha}(0)=1.$$ For $k\ge 1$, we let  $p_{ji}(k)= 1(2+\mathcal{D}_i^+ )$ for $j \in   N_i^{+}\cup \{i\}$ and $p_{ji}(k)= 0$,  otherwise. Also,  $$a_i^{\beta,\alpha}(k)= \frac{1}{2+\mathcal{D}_i^+ },  \quad a_i^{\beta,\beta}(k)=a_i^{\alpha,\beta}(k)= \frac{1}{2}.$$
	\begin{remark}
			Under the above weight mechanism, the state-decomposition-based push-sum \eqref{e1} still preserves the property of conventional push-sum. 
		Rigorous theoretical  analysis will be provided in Section \ref{analysis}.
	\end{remark}

		To obtain the exact average value in finite time, 		we use the minimal polynomial associated with  iteration \eqref{e1}, in conjunction with the final value theorem \cite{charalambous2015distributed, yuan2009decentralised}. Next, we provide definitions on minimal polynomials.
		
		\begin{definition}
			\textit{(Minimal Polynomial of a Matrix.)} The minimal polynomial of matrix $P$, denoted by
			$$Q(t)=t^{D+1}+\sum\limits_{i=0}^D\alpha_i t^i,
			$$ is the monic polynomial of minimum degree $D+1$ that satisfies $Q(P)=0_{n}$ and $\alpha_i$ is the polynomial coefficient. 
			\end{definition}
		\begin{definition}
			\textit{(Minimal Polynomial of a Matrix Pair.)} The minimal polynomial associated with   $[P, e^\top_
			j ]$, denoted by $$Q_j(t) = t^{D_j+1} + \sum_{i=0}^{D_j} \alpha_{j,i}t^i=0, \alpha_{j,i} \in \mathbb{R},$$ is the
			monic polynomial of minimum degree $D_j + 1$ that satisfies
			$e^\top_jQ_j(P)=0$. 
		\end{definition}
		
		In what follows, we will show how to use the coefficients of the minimal polynomial to obtain the final value in finite time. By using the iteration in \eqref{e1}, we have
	$$
			\sum_{i=0}^{D_j+1} \alpha_{j,i}x_{j,1}^\alpha(k+i)=0, \forall k \in \mathbb{Z}_{++},
		$$
		where $\alpha_{j,D_j+1}=1$. Thus,  the minimal polynomial of a matrix is unique due to the monic property. We denote the $\mathcal{Z}$-transform of $x_{j,1}(k)$ as $X_{j,1}=\mathcal{Z}(x_{j,1}(k))$. By the time-shift property of the $\mathcal{Z}$-transform, it is easy to obtain that	
	$$
			X_{j,1}(z)=\frac{{\sum_{i=1}^{D_j+1} \alpha_{j,i}\color{black}\sum_{l=1}^{i}}x_{j,1}^\alpha(l)z^{i-l}}{Q_j(z)}. 
		$$
		Since the communication topology of the networked system is strongly connected, the minimal polynomial $Q_j(z)$ does not have any unstable poles apart from one. Hence, we can define  polynomial
			$$p_j(z)\triangleq\frac{Q_j(z)}{z-1}\triangleq \sum\limits_{i=0}^{D_j}\beta_i^{(j)}z^i. $$
	
		By the final value theorem \cite{charalambous2015distributed} and \cite{yuan2009decentralised},  the final state
		values of \eqref{e1} are computed as
		\begin{equation*}
			\begin{aligned}
				&\phi^\alpha_{x_l}(j)=\lim\limits_{k\rightarrow\infty} x_{j,l}^\alpha(k)=\lim\limits_{z\rightarrow 1}(z-1)X_{j,l}^\alpha(z)=\frac{(x_{l,{D_j}}^\alpha)^\top \bm{\beta}_j}{ \bm{1}^\top\bm{\beta}_j},\\
							\end{aligned}
		\end{equation*}
		where 
		$$(x_{l,{D_j}}^\alpha)^\top=(x_{j,l}^\alpha(1),x_{j,l}^\alpha(1),\ldots, x_{j,l}^\alpha(D_j+1)), l=1,2,
		$$ and $\bm{\beta}_j$ is the coefficient vector of the polynomial $p_j(z)$.
		
		Denote the following vectors of $2k+1$ successive discrete-time values for the two iterations $x_{j,l}^\alpha(k), l=1,2$ at node $j$ as

	{\color{black}$$(x_{l,{2k}}^\alpha)^\top=(x_{j,l}^\alpha(1),x_{j,l}^\alpha(1),\ldots, x_{j,l}^\alpha(2k+1)), l=1,2.		$$}
		
		Moreover, define the associated Hankel matrix and the  difference vectors between successive values for $l=1,2$ as 
	{\color{black}	$$
	\begin{aligned}
			\Gamma\{(x_{l,{2k}}^\alpha)^\top\}&\triangleq\\
			&\begin{bmatrix}
			x_{j,l}^\alpha(1)&x_{j,l}^\alpha(2)&\cdots &x_{j,l}^\alpha(k+1)\\
			x_{j,l}^\alpha(2)&x_{j,l}^\alpha(3)&\cdots &x_{j,l}^\alpha(k+2)\\
			\vdots&\vdots&\ddots&\vdots\\
			x_{j,l}^\alpha(k+1)&x_{j,l}^\alpha(k+2)&\cdots &x_{j,lt}^\alpha(2k+1)\\

		\end{bmatrix},
	\end{aligned}
		$$
					$$
		\begin{aligned}
			(\bar x_{l,{2k}}^\alpha)^\top &\triangleq\\
			&(x_{j,l}^\alpha(2)-x_{j,l}^\alpha(1),\ldots, x_{j,l}^\alpha(2k+2)-x_{j,l}^\alpha(2k+1)). \\
		\end{aligned}
		$$	}
		
		It is shown in \cite{yuan2009decentralised} that for arbitrary initial conditions
		$x_{j,1}^\alpha(1)$ and $x_{j,2}^\alpha(1)$, $\bm{\beta_j}$ can be computed as the kernel of the first defective Hankel matrices $\Gamma\{(\bar x_{1,{2k}}^\alpha)^\top\}$ and $\Gamma\{(\bar x_{2,{2k}}^\alpha)^\top\}$, except a set of initial conditions with Lebesgue measure zero.
		
		{\color{black}From the above analysis, we know $\bm{\beta_j}$ and $D_j$ can be different for  node $j$. Thus, in existing works \cite{jiang2022fast},\cite{charalambous2015distributed}, all nodes  are assumed to know the upper bound of the network size. To relax this  assumption,  Charalambous and Hadjicostis \cite{charalambous2018stop} proposed a distributed termination mechanism, allowing all nodes to agree when to terminate their iterations, given they have all computed the average.  
		The procedure is as follows: 
		\begin{itemize}
			\item Once iterations \eqref{e1} are initiated, each node $j$ also initiates two counters $c_j$, $c_j(0)=0$, and $r_j$, $r_j(0)=0$. Counter $c_j$ increments by one at every time step, i.e., $c_j(k+1)= c_j(k)+1$. The way counter $r_j$ updates is described next.
			\item Alongside iterations \eqref{e1} a $\max$-consensus algorithm is initiated as well, given by
			\begin{align}\label{eq:maxconsensus}
			\theta_j(k+1) = \max_{v_i \in \mathcal{N}_j \cup \{v_j\}}\big\{ \max\{\theta_i(k),c_i(k)\} \big\}, 
			\end{align}
			with $\theta_j(0)=0$. Then, $r_j$ is updated as follows:
			\begin{align}\label{eq:rj}			r_j(k+1)=
			\begin{cases}
			0, & \text{if } \theta_j(k+1) \neq \theta_j(k), \\
			r_j(k)+1, & \text{otherwise}.
			\end{cases}
			\end{align}
			\item Once the  Hankel matrices $\Gamma\{(\bar x_{1,{D_j}}^\alpha)^\top\}$ and $\Gamma\{(\bar x_{2,{D_j}}^\alpha)^\top\}$ lose rank, node $j$ saves the count of the counter $c_j$ at that time step, denoted by $k^o_j$, as $c^o_j$, i.e., $c^o_j\triangleq c_j[k^o_j]$, and it stops incrementing the counter, i.e., $\forall k'\geq k^o_j, c[k']=c_j[k^o_j]=c^o_j$. Note that {\color{black}$c^o_j=2(D_j+1)+1$}. 
			\item Node $j$ can terminate iterations \eqref{e1}  when $r_j$ reaches $c^o_j$.
		\end{itemize}

		Therefore, based on the distributed termination mechanism \cite{charalambous2018stop,jiang2021fully}, we design a  privacy-preserving finite-time push-sum algorithm (PrFTPS) as presented in Algorithm \ref{alg1}, which guarantees the minimum number of iteration steps to obtain the exact average without any global information.	
		\begin{breakablealgorithm}
			\caption{A Privacy-Preserving Finte-Time  Push-Sum Algorithm (PrFTPS)}
			\label{alg1}
			\begin{algorithmic}[1]\\
				{\bf Input:} %
				Initial state $x_j(0)$,  step $t$, graph $\mathcal{G}(\mathcal{V}, \mathcal{E})$.
				\If {$t=0$}
				\\
		Run the privacy-preserving iteration \eqref{e1} and the max-consensus algorithm \eqref{eq:maxconsensus},  store the vectors  $(\bar x_{1,{D_j}}^\alpha)^\top$,  $(\bar x_{2,{D_j}}^\alpha)^\top$, increment the value of the counter $c_j (k)$ and find the value of the counter $r_j(k)$ via \eqref{eq:rj}.\\
		Increase the dimension $k$ of the Hankel matrices $\Gamma\{(\bar x_{1,{D_j}}^\alpha)^\top\}$ and $\Gamma\{(\bar x_{2,{D_j}}^\alpha)^\top\}$ until $k_j^o$ at which they lose rank. Once this happens, store the kernel $\bm{\beta}_j$	of the 	first defective 	matrix and the value {\color{black}$c^o_j=2(D_j+1)+1$}.\\
		Continue  iteration \eqref{e1} until iteration $k_{j,t}$ where 	$r_j(k_{j,t})=c^o_j$ and store $$D_{\text{max}}=\frac{k_{j,t}-2D_j-2}{2}-1.$$ 							
				\Else \\		
	Run the privacy-preserving algoirthm \eqref{e1} for {\color{black}$k_{\max}=D_{\max}+2$} steps with the same  $\bm{\beta_j}$.
				\EndIf
							\State  Compute the average value as $\hat x_j^{ave}=\frac{(x_{1,{D_j}}^\alpha)^\top \bm{\beta}_j}{(x_{2,{D_j}}^\alpha)^\top \bm{\beta}_j}$.
				\State {\bf Output:} Node $j\in\mathcal{V}$ outputs  $\hat x_j^{ave}$.
			\end{algorithmic}
		\end{breakablealgorithm}}
		
		{\color{black}\begin{remark}
			Compared to existing state-decomposition-based privacy-preserving average consensus in \cite{wang2019privacy} and \cite{chen2020privacy}, PrFTPS is applicable to general digraphs, while the method in \cite{wang2019privacy} is limited to undirected graphs with doubly-stochastic methods. Moreover, our innovative weight mechanism (Section \ref{wm}) maintains constant weights in the privacy-preserving iteration \eqref{e1} for $k \ge 1$, as opposed to the time-varying weights in \cite{wang2019privacy} and \cite{chen2020privacy}. These constant weights play a crucial role in the final value theorem \cite{yuan2009decentralised}, allowing PrFTPS to compute an exact average consensus in finite time using the coefficients of the minimal polynomial associated with iteration \eqref{e1}. In contrast, the weight mechanisms in \cite{wang2019privacy} and \cite{chen2020privacy} only permit asymptotic average consensus, which limits their application to solving distributed optimization problems. Our proposed weight mechanism overcomes this limitation and facilitates the application of our PrFTPS algorithm to solve distributed optimization problems while preserving privacy, as shown in Algorithm \ref{alg2}.					\end{remark}}

		\subsection{Finite-Time  Privacy-Preserving Push-Sum based  Gradient Descent Algorithm}
		In this subsection, we design a  PrFTPS based gradient method to  address   problem \eqref{pro1}.  We first assume the following  conditions about Problem \eqref{pro1}. 
	\begin{assumption}\label{asp3}
Each objective function $f_i$ is $\mu-$strongly convex with $L-$Lipschitz continuous gradients, i.e., 		
{\color{black} 
$$\begin{aligned} 
	&\langle\nabla f_i(\mathbf{x})- \nabla f_i(\mathbf{y}), \mathbf{x}-\mathbf{y}\rangle \geq \mu||\mathbf{x}-\mathbf{y}||^2,\\
	&||\nabla f_i(\mathbf{x})- \nabla f_i(\mathbf{y})||\leq L||\mathbf{x}-\mathbf{y}||, \quad \forall \mathbf{x}, \mathbf{y}\in\mathbb {R}^p.\\
\end{aligned}
$$}	
\end{assumption}

Under Assumption \ref{asp3}, Problem \eqref{pro1} has a unique optimal solution $x^{\star}\in\mathbb{R}^{p}$ \cite{pu2020push}.

		To address problem \eqref{pro1} distributively, we propose the following  PrFTPS based GD algorithm inspired by the distributed structure in \cite{xin2018linear, pu2020push,jiang2022fast}. Starting from the initial condition $x_i(0)\in\mathbb{R}^p$ and $y_i(0)=\nabla f_i(x_i(0))$, for all $t\ge 0$, we have	
		\begin{subequations}\label{eqmain}
			\begin{align}	
							& y_i(t) \leftarrow \text{Algorithm \ref{alg1}} (\nabla f_i(x_i(t)),t),\\
				&x_i(t+1)=\sum\limits_{j \in N_i^-\cup\{i\}}\bar a_{ij}x_j(t)-\eta y_i(t),\label{eqa}
			\end{align}	
		\end{subequations}where {\color{black}$\eta$} is the stepsize and $\mathbf{\bar A}=[\bar a_{ji}]\in \mathbb{R}^{n}$ is   row-stochastic. The details are summarized in Algorithm \ref{alg2} in the following. 
		\begin{breakablealgorithm}
			\caption{A Privacy-Preserving Finte-Time  Push-Sum based GD Algorithm (PrFTPS-GD)}
			\label{alg2}
			\begin{algorithmic}[1]\\
				{\bf Initialization: } \hspace*{-1mm}Stepsize $ \eta$, maximum optimization iteration number $T$, graph $\mathcal{G}(\mathcal{V}, \mathcal{E})$.\\
				{\bf Input:} %
				Node $i\in\mathcal{V}$ sets the initial value $x_i(0), y_i(0)=\nabla f_i(x_i(0))$ and $t=0$.
				\For {$t\le T $}
								\State Put $\nabla f_i(x_i(t)), t$ as input to Algorithm \ref{alg1} and get output \hspace*{4mm} $\hat x^{ave}$; design $y_i(t)=\hat x^{ave}$. 
				\State Compute $x_i(t+1)$ using \eqref{eqa} with $y_i(t)$. 
				\State $t\leftarrow t+1$
				\EndFor
				\State {\bf Output:} Node $i\in\mathcal{V}$ obtains the solution $x^\star$.
			\end{algorithmic}
		\end{breakablealgorithm}
		
Algorithm \ref{alg2} guarantees that the number of iterations needed at every optimization step $t\ge 1$ is the minimum. Fig. \ref{ft} shows the number of iterations
		needed at every optimization step.
			\begin{figure}[htp]
		\centering
		\includegraphics[width=0.48\textwidth]{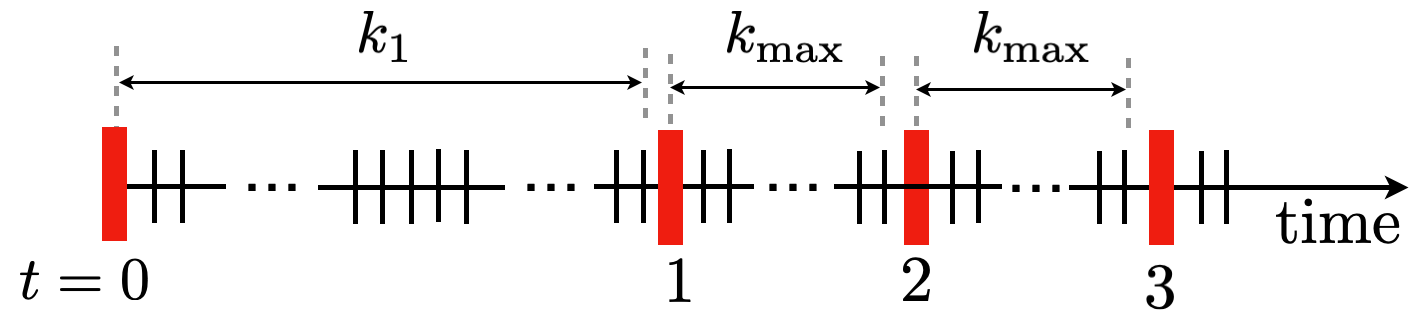}
		\caption{The finite-time consensus algorithm is terminated after $k_1=4(D_{\text{max}}+1)$ iterations	in the first step of the optimization. After the first  step, the consensus is terminated after $k_{\text{max}}=D_{\text{max}}+1$ iterations.}
		\label{ft} 
	\end{figure} 

{\color{black}\begin{remark}
Compared to conventional distributed optimization algorithms, such as AB \cite{xin2018linear} and Push-Pull \cite{pu2020push}, PrFTPS-GD requires additional communication rounds for each optimization step, as illustrated in Fig. \ref{ft}. Although the separate time-scales for optimization and consensus steps may slow down the convergence speed, they are crucial for ensuring privacy preservation and accuracy of PrFTPS-GD, as demonstrated by the rigorous theoretical analysis presented in Sections \ref{analysis} and \ref{pra}. \end{remark}}
	
\subsection{Convergence Analysis} \label{analysis}
		In this subsection, we provide the proof of the convergence and accuracy of Algorithm 1 and Algorithm 2. 
		
		From the weight mechanism, it can be seen that for $k\ge 1 $, the coupling weights are constants. Hence,  iteration \eqref{e1}   can  be written by using matrix-vector notation as follows: 
		\begin{equation}	 \label{e2}   
				\bm{x_l}(k+1)=\bm{\hat P}\bm{x_l}(k), \quad 
			\forall k\ge 1, l=1,2,
		\end{equation} 
		where $$\bm{x_l}(k)=[x_{1,l}^\alpha(k),\ldots,x_{n,l}^\alpha(k), x_{1,l}^\beta(k),\ldots,x_{n,l}^\beta(k)]^\top,$$
	$$	
			\bm{\hat P}=\left[
			\begin{array}{cc}
				\bm{P}&\frac{1}{2}\bm{I}_{n}\\ 
				\bm{\Lambda} & \frac{1}{2}\bm{I}_{n}
			\end{array}
			\right]
	$$
		with  $\bm{\Lambda}=\text{diag}(a_1^{\beta,\alpha},\ldots, a_n^{\beta,\alpha})$ and  $\bm{P}=[p_{ij}]$.
		
		Moreover, equation \eqref{eqa} can be rewritten as
		\begin{equation}\label{eq:op}
			\bm{x}(t+1)=\mathbf{A} \bm{x}(t)-\eta \bm{y}(t),
					\end{equation}
					where $					\mathbf{A}=\mathbf{\bar A} \otimes \mathbf{I}_p, \bm{x}(t)=[x_1(t)^\top,\ldots,x_n(t)^\top]^\top$  and $\bm{y}(t)=[y_1(t)^\top,\ldots,y_n(t)^\top]^\top.$
					
Before presenting Theorem 1, the following lemmas are needed.
			\begin{lemma}\label{lem1}(\hspace{-0.001cm}{\color{black}Theorem 8.4.4 in \cite{horn2012matrix}})
	 Under Assumption \ref{asp1}, the matrix $A$ has a unique nonnegative left eigenvector $u^\top$ (with respect to eigenvalue $1$) with $u^\top \mathbf{1}_{np}=np.$ 
\end{lemma}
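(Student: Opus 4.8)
The plan is to obtain Lemma~\ref{lem1} as a direct application of the Perron--Frobenius theorem for irreducible nonnegative matrices (which is precisely Theorem~8.4.4 of \cite{horn2012matrix}), applied first to the row-stochastic weight matrix $\bar A$ (recall $A=\bar A\otimes\mathbf I_p$) and then lifted back to $A$ through the Kronecker structure.

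First I would record two elementary facts about $\bar A$. Since $\bar A$ is row-stochastic, $\bar A\mathbf 1_n=\mathbf 1_n$, so $1$ is an eigenvalue of $\bar A$; and its induced $\infty$-norm equals its maximal row sum, which is $1$, so $\rho(\bar A)\le\|\bar A\|_\infty=1$, hence $\rho(\bar A)=1$. Moreover, under Assumption~\ref{asp1} the digraph $\mathcal G$ is strongly connected, and since $\bar A$ is a consensus weight matrix conforming to $\mathcal G$ (a positive entry on the diagonal and on every edge of $\mathcal E$), the directed graph associated with $\bar A$ is strongly connected as well, i.e.\ $\bar A$ is irreducible.

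Then I would invoke Theorem~8.4.4 of \cite{horn2012matrix}: for the irreducible nonnegative matrix $\bar A$, the eigenvalue $\rho(\bar A)=1$ is algebraically simple and there is a componentwise positive left eigenvector, unique up to a positive scalar; rescaling so that the entries sum to $n$ gives a unique $\bar u^\top>0$ with $\bar u^\top\bar A=\bar u^\top$ and $\bar u^\top\mathbf 1_n=n$. To pass to $A=\bar A\otimes\mathbf I_p$, set $u^\top\triangleq\bar u^\top\otimes\mathbf 1_p^\top$; by the mixed-product property, $u^\top A=(\bar u^\top\bar A)\otimes(\mathbf 1_p^\top\mathbf I_p)=\bar u^\top\otimes\mathbf 1_p^\top=u^\top$, so $u^\top$ is a nonnegative (indeed positive) left eigenvector of $A$ for eigenvalue $1$, and $u^\top\mathbf 1_{np}=(\bar u^\top\mathbf 1_n)(\mathbf 1_p^\top\mathbf 1_p)=np$, which is the stated normalization.

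The only delicate point, and the one I would be careful to phrase precisely, is the uniqueness claim: the eigenvalues of $A=\bar A\otimes\mathbf I_p$ are those of $\bar A$, each repeated $p$ times, so the left eigenspace of $A$ for eigenvalue $1$ is the $p$-dimensional space $\{\bar u^\top\otimes v^\top:v\in\mathbb R^p\}$. Hence a nonnegative left eigenvector of $A$ is unique only once one restricts to eigenvectors respecting the $p$-block structure (equivalently, the substantive content of the lemma is about $\bar A$ and the factor $\otimes\mathbf I_p$ is cosmetic); with that restriction and the constraint $u^\top\mathbf 1_{np}=np$, uniqueness follows from the uniqueness already obtained for $\bar u^\top$. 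Aside from this bookkeeping there is no real obstacle: the proof reduces to the cited Perron--Frobenius theorem together with the two one-line observations that $\bar A$ is irreducible and has spectral radius $1$.
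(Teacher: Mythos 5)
Your proposal is correct and takes the same route the paper intends: the paper offers no proof beyond the citation to Theorem~8.4.4 of \cite{horn2012matrix}, and your argument is exactly that theorem applied to the irreducible row-stochastic matrix $\bar{\mathbf{A}}$, followed by the Kronecker-product bookkeeping $u^\top=\bar u^\top\otimes\mathbf 1_p^\top$. Your caveat about uniqueness is well taken --- for $p>1$ the left eigenspace of $\mathbf A=\bar{\mathbf A}\otimes\mathbf I_p$ at eigenvalue $1$ is $p$-dimensional, so the normalized nonnegative left eigenvector of $\mathbf A$ is not literally unique and the lemma's uniqueness claim should be read as applying to $\bar{\mathbf A}$ (or to the block-structured choice $\bar u^\top\otimes\mathbf 1_p^\top$), which is all the convergence analysis actually uses.
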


\begin{lemma}\label{lemma4}(\hspace{-0.001cm}Adapted from Lemma 4 in \cite{pu2020push}) {\color{black}Under Assumptions \ref{asp1}, there exists a matrix  norm $||\cdot||_A$, defined as $||M||_A=AM A^{-1}$ for all $M \in \mathbb{R}^{np\times np}$, where $A\in \mathbb{R}^{{np}\times {np}}$  is invertible,    such that $\sigma_A:=||\mathbf{A}-\frac{\mathbf{1}_{np} u^\top}{n}||_A<1$, where $\mathbf{A}$ is the update matrix defined in \eqref{eq:op}, and $\sigma_A$ is arbitrarily  close to the spectral radius $\rho(\mathbf{A}-\frac{\mathbf{1}_{np} u^\top}{n})<1$.} 
\end{lemma}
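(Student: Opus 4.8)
The plan is to split the proof into two essentially independent parts: (i) the spectral bound $\rho(\mathbf{A}-\frac{\mathbf{1}_{np}u^\top}{n})<1$, and (ii) the existence, for any square matrix and any tolerance $\epsilon>0$, of a submultiplicative matrix norm obtained by conjugation that overestimates the spectral radius by at most $\epsilon$. The lemma then follows by applying (ii) to $B:=\mathbf{A}-\frac{\mathbf{1}_{np}u^\top}{n}$.

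For part (i), I would use the structure $\mathbf{A}=\mathbf{\bar A}\otimes\mathbf{I}_p$ from \eqref{eq:op}, so that the spectrum of $\mathbf{A}$ equals that of the row-stochastic matrix $\mathbf{\bar A}$ with each eigenvalue replicated $p$ times. Under Assumption~\ref{asp1} the digraph is strongly connected, and since the sum in \eqref{eqa} ranges over $N_i^-\cup\{i\}$ we have $\bar a_{ii}>0$ for every $i$; hence $\mathbf{\bar A}$ is primitive, and by the Perron--Frobenius theorem its eigenvalue $1$ is simple, with right eigenvector $\mathbf{1}_n$ and a nonnegative left eigenvector normalized as in Lemma~\ref{lem1}, while every other eigenvalue has modulus strictly below $1$. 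The centering matrix $\frac{\mathbf{1}_{np}u^\top}{n}$ is the (Kronecker-lifted) spectral projector onto the eigenvalue-$1$ eigenspace; invoking $\mathbf{A}\mathbf{1}_{np}=\mathbf{1}_{np}$, $u^\top\mathbf{A}=u^\top$ and $u^\top\mathbf{1}_{np}=np$ from Lemma~\ref{lem1}, one checks that it commutes with $\mathbf{A}$, coincides with $\mathbf{A}$ on the consensus subspace $\mathbf{1}_n\otimes\mathbb{R}^p$, and vanishes on the complementary $\mathbf{A}$-invariant subspace. Therefore $B=\mathbf{A}-\frac{\mathbf{1}_{np}u^\top}{n}$ annihilates the consensus subspace and acts exactly as $\mathbf{A}$ on its complement, so the spectrum of $B$ is that of $\mathbf{A}$ with the eigenvalue $1$ replaced by $0$; in particular $\rho(B)<1$.

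For part (ii), fix a submultiplicative base norm $||\cdot||_\star$ on $\mathbb{R}^{np\times np}$ (for instance the maximum absolute row sum norm). By the standard construction — Schur-triangularize $B$ and then conjugate by a diagonal matrix whose entries damp the strictly upper-triangular part, as in Lemma~5.6.10 of \cite{horn2012matrix} — for every $\epsilon>0$ there is an invertible $A\in\mathbb{R}^{np\times np}$ with $||ABA^{-1}||_\star\le\rho(B)+\epsilon$. Defining $||M||_A:=||AMA^{-1}||_\star$ yields a genuine submultiplicative matrix norm, since conjugation by an invertible matrix preserves homogeneity, the triangle inequality and submultiplicativity. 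Then $\sigma_A:=||B||_A\le\rho(B)+\epsilon$, and because $\rho(B)<1$ from part (i) and $\epsilon>0$ is arbitrary, $\sigma_A$ can be chosen strictly less than $1$ and arbitrarily close to $\rho(B)$, which is precisely the claim.

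I expect the main obstacle to be part (i), specifically the bookkeeping around the eigenvalue $1$: one must identify the centering operator precisely enough to see that it is exactly the Perron projector of $\mathbf{\bar A}$ lifted to $\mathbb{R}^{np}$ — reconciling the $p$-fold multiplicity of the eigenvalue $1$ of $\mathbf{A}=\mathbf{\bar A}\otimes\mathbf{I}_p$ with the normalization $u^\top\mathbf{1}_{np}=np$ — and then confirm that subtracting it removes every copy of the eigenvalue $1$ while creating no eigenvalue of modulus $\ge1$. Once $\rho(B)<1$ is in hand, part (ii) is entirely routine and follows the corresponding step in Pu et al.\ \cite{pu2020push}.
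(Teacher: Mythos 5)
The paper itself offers no proof of Lemma \ref{lemma4}: it is imported, up to notation, from Lemma 4 of \cite{pu2020push}. Your step (ii) is exactly the standard argument behind that result --- Horn and Johnson's construction (Lemma 5.6.10 of \cite{horn2012matrix}) of a submultiplicative norm within $\epsilon$ of the spectral radius, applied to $B=\mathbf{A}-\frac{\mathbf{1}_{np}u^\top}{n}$ --- and it also silently repairs the paper's typo $||M||_A=AMA^{-1}$ (a matrix, not a scalar) by composing the conjugation with a base norm. That part is routine and correct.

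The gap is in part (i), precisely at the point you flag as ``the main obstacle'' and then assert rather than verify. If $u$ is literally a nonnegative vector in $\mathbb{R}^{np}$ with $u^\top\mathbf{1}_{np}=np$, as Lemma \ref{lem1} states, then $\frac{\mathbf{1}_{np}u^\top}{n}$ has rank one, whereas the spectral projector of $\mathbf{A}=\mathbf{\bar A}\otimes\mathbf{I}_p$ onto its eigenvalue-$1$ eigenspace $\mathbf{1}_n\otimes\mathbb{R}^p$ has rank $p$; the two cannot coincide for $p>1$. Concretely, writing $u=u_{\bar A}\otimes\mathbf{1}_p$ with $u_{\bar A}^\top\mathbf{1}_n=n$, one finds $\frac{\mathbf{1}_{np}u^\top}{n}(\mathbf{1}_n\otimes w)=\mathbf{1}_n\otimes(\mathbf{1}_p^\top w)\mathbf{1}_p$, which equals $\mathbf{1}_n\otimes w$ only when $w\in\mathrm{span}(\mathbf{1}_p)$. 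Hence $B$ acts as the identity on the $(p-1)$-dimensional subspace $\{\mathbf{1}_n\otimes w:\mathbf{1}_p^\top w=0\}$, so under the literal reading $\rho(B)=1$ and your claimed check (``coincides with $\mathbf{A}$ on the consensus subspace'') fails. The fix --- which is what \cite{pu2020push} and the paper's own proof of Theorem \ref{t1} implicitly do, since there $\bar x(t)=u^\top\bm{x}(t)/n$ must lie in $\mathbb{R}^p$, forcing $u^\top$ to be the $p\times np$ matrix $u_{\bar A}^\top\otimes\mathbf{I}_p$ --- is to read the centering operator as $\bigl(\frac{\mathbf{1}_n u_{\bar A}^\top}{n}\bigr)\otimes\mathbf{I}_p$, i.e., to establish $\rho\bigl(\mathbf{\bar A}-\frac{\mathbf{1}_n u_{\bar A}^\top}{n}\bigr)<1$ at the $n\times n$ level and only then tensor with $\mathbf{I}_p$. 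Your primitivity argument does exactly this, provided $\bar a_{ii}>0$ --- a hypothesis the paper never states but which you correctly identify as necessary, since an irreducible row-stochastic matrix without positive diagonal can have peripheral eigenvalues other than $1$. With those two repairs your argument is complete and follows the same route as the cited source.
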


Now, we  present Theorem 1 in the following.

						\begin{theorem}\label{t1}
			Under   Assumptions {\color{black}\ref{asp1} and \ref{asp3}}, for each node $j\in\mathcal{V}$, 
			
			1) Algorithm \ref{alg1} outputs the exact average of initial values of all nodes, i.e., $\forall j\in\mathcal{V}, \hat x_j^{ave}=\frac{1}{n}\sum_{i \in\mathcal{V}}x_i(0).$
			
		{\color{black}	2) When $0<\eta<\frac{1}{\mu+L}$, where $\mu, L$ are defined in Assumption  \ref{asp3}, Algorithm \ref{alg2} converges linearly related to
			the  optimization iteration number to the global optimal, i.e., $||\bm{x}(t)-\bm{1}\otimes x^\star||_2$ converges to $0$ linearly.}
		\end{theorem}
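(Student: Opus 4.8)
\emph{Plan.} The statement splits into two parts, which I would prove in order. For~1), the idea is to show that, under the weight mechanism of Section~\ref{wm}, the state-decomposed recursion \eqref{e1} is a bona fide push-sum on an augmented digraph of $2n$ nodes, so that the ratio it maintains converges asymptotically to the true average, and then to invoke the minimal-polynomial / final-value-theorem machinery already assembled above (together with the distributed termination rule of \cite{charalambous2018stop}) to conclude that this value is recovered exactly from finitely many samples. For~2), the idea is to use part~1) to replace each $y_i(t)$ by the \emph{common} exact averaged gradient $g(t):=\tfrac1n\sum_{j\in\mathcal V}\nabla f_j(x_j(t))$, decompose $\bm x(t)$ into a Perron-weighted average $\bar x(t)$ and a consensus error $\bm x_\perp(t):=\bm x(t)-\mathbf 1_n\otimes\bar x(t)$, observe that $\bm x_\perp(t)$ evolves autonomously and contracts at rate $\sigma_A$ (Lemma~\ref{lemma4}), show that $\bar x(t)$ performs a gradient step on $\tfrac1n F$ perturbed by a term of size $O(\|\bm x_\perp(t)\|)$, and close a scalar two-term recursion.

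\emph{Part 1.} First I would verify that the stacked map $\hat{\bm P}$ in \eqref{e2}, as well as the $k=0$ map in \eqref{e1}, is column-stochastic: this is exactly what the constraints $\sum_j p_{ji}(0)+a_i^{\beta,\alpha}(0)=1$, $a_i^{\alpha,\beta}(0)+a_i^{\beta,\beta}(0)=1$ and, for $k\ge 1$, $\sum_{j\in N_i^+\cup\{i\}}p_{ji}(k)+a_i^{\beta,\alpha}(k)=1$ together with $a_i^{\alpha,\beta}(k)+a_i^{\beta,\beta}(k)=1$ are designed to guarantee. Moreover $\hat{\bm P}$ is primitive: the $\alpha$-block inherits strong connectivity from Assumption~\ref{asp1}, the positive coupling weights between $\alpha_i$ and $\beta_i$ make the two mutually reachable, and the positive self-loops $p_{ii}(k)>0$ ensure aperiodicity. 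Column-stochasticity of both maps gives the conserved masses $\mathbf 1^\top\bm x_1(1)=\mathbf 1^\top\bm x_1(0)=\sum_i\big(x_{i,1}^\alpha(0)+x_{i,1}^\beta(0)\big)=2\sum_i x_i(0)$ and $\mathbf 1^\top\bm x_2(1)=\mathbf 1^\top\bm x_2(0)=\sum_i\big(x_{i,2}^\alpha(0)+x_{i,2}^\beta(0)\big)=2n$; Perron--Frobenius then yields $\hat{\bm P}^{\,k}\to\pi\mathbf 1^\top$ with $\pi>0$, $\mathbf 1^\top\pi=1$, so $x_{j,1}^\alpha(k)\to 2\pi_{\alpha_j}\sum_i x_i(0)$ and $x_{j,2}^\alpha(k)\to 2n\pi_{\alpha_j}$, whence the ratio tends to $\tfrac1n\sum_i x_i(0)$ for every $j$. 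Combining this with the final-value identity already derived, namely $\lim_k x_{j,l}^\alpha(k)=(x_{l,D_j}^\alpha)^\top\bm{\beta}_j/(\mathbf 1^\top\bm{\beta}_j)$, gives $\hat x_j^{ave}=(x_{1,D_j}^\alpha)^\top\bm{\beta}_j/\big((x_{2,D_j}^\alpha)^\top\bm{\beta}_j\big)=\tfrac1n\sum_i x_i(0)$; finally, the max-consensus and counter rules \eqref{eq:maxconsensus}--\eqref{eq:rj}, following \cite{charalambous2018stop}, ensure that each node terminates only after collecting enough samples to identify $\bm{\beta}_j$ and the global bound $D_{\max}$, so the output is exact and obtained in finitely many steps.

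\emph{Part 2.} By part~1), $y_i(t)=g(t)$ for all $i$, so \eqref{eq:op} becomes $\bm x(t+1)=\mathbf A\bm x(t)-\eta(\mathbf 1_n\otimes I_p)g(t)$. Let $\bar u^\top$ be the nonnegative left eigenvector of $\bar{\mathbf A}$ with $\bar u^\top\mathbf 1_n=n$ (Lemma~\ref{lem1}), and put $\bar x(t):=\tfrac1n\sum_i\bar u_i x_i(t)$. Using $\bar u^\top\bar{\mathbf A}=\bar u^\top$, $\bar{\mathbf A}\mathbf 1_n=\mathbf 1_n$ and $\bar u^\top\mathbf 1_n=n$, one obtains $\bar x(t+1)=\bar x(t)-\eta g(t)$ and, crucially, $\bm x_\perp(t+1)=\mathbf A\bm x(t)-\mathbf 1_n\otimes\bar x(t)=\mathbf A\bm x_\perp(t)$: the gradient correction lies in the consensus direction $\mathbf 1_n\otimes(\cdot)$, hence is absorbed entirely by $\bar x(t)$ and leaves $\bm x_\perp(t)$ unaffected. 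Since $\bm x_\perp(t)$ lies in the disagreement subspace (annihilated by $\bar u^\top\otimes I_p$), $\mathbf A$ acts on it as the matrix $\mathbf A-\tfrac{\mathbf 1_{np}u^\top}{n}$ of Lemma~\ref{lemma4}, so $\|\bm x_\perp(t+1)\|_A\le\sigma_A\|\bm x_\perp(t)\|_A$ and hence $\|\bm x_\perp(t)\|_A\le\sigma_A^{\,t}\|\bm x_\perp(0)\|_A$. For the average, write $\bar x(t+1)-x^\star=\big(\bar x(t)-\tfrac{\eta}{n}\nabla F(\bar x(t))-x^\star\big)-\eta\big(g(t)-\tfrac1n\nabla F(\bar x(t))\big)$. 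Under Assumption~\ref{asp3}, $\tfrac1n F$ is $\mu$-strongly convex with $L$-Lipschitz gradient and $\nabla F(x^\star)=0$, so the standard gradient-descent estimate gives $\|\bar x(t)-\tfrac{\eta}{n}\nabla F(\bar x(t))-x^\star\|_2\le\lambda\|\bar x(t)-x^\star\|_2$ with $\lambda=\sqrt{1-\tfrac{2\eta\mu L}{\mu+L}}<1$ whenever $0<\eta<\tfrac1{\mu+L}$, while $\|g(t)-\tfrac1n\nabla F(\bar x(t))\|_2\le\tfrac{L}{\sqrt n}\|\bm x_\perp(t)\|_2$ by $L$-Lipschitzness and the Cauchy--Schwarz inequality. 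Combining these, and using equivalence of $\|\cdot\|_A$ and $\|\cdot\|_2$, yields $\|\bar x(t+1)-x^\star\|_2\le\lambda\|\bar x(t)-x^\star\|_2+c\,\sigma_A^{\,t}$ for some constant $c$; this scalar linear recursion decays like $\max\{\lambda,\sigma_A\}^{\,t}$ (times a polynomial factor if $\lambda=\sigma_A$), and then $\|\bm x(t)-\mathbf 1\otimes x^\star\|_2\le\|\bm x_\perp(t)\|_2+\sqrt n\,\|\bar x(t)-x^\star\|_2\to 0$ linearly.

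\emph{Main obstacle.} The technical heart is part~1): confirming that the extra $\beta$-substates destroy neither the mass conservation nor the primitivity that make \eqref{e1} a genuine push-sum on the $2n$-node augmentation, and --- more delicately --- that the \emph{constant}-weight regime for $k\ge 1$ makes each sequence $\{x_{j,l}^\alpha(k)\}$ obey a minimal polynomial with a single unit root, which is precisely what lets the final-value theorem extract the exact average from finitely many samples and makes the distributed termination test in Algorithm~\ref{alg1} fire at the correct step. Part~2) is then comparatively routine; the one point that genuinely matters is the observation that $y_i(t)$ is common to all agents, because this is what decouples the consensus error from the optimization and reduces the two-time-scale analysis to a pair of scalar geometric recursions, the remaining work being only the bookkeeping between $\|\cdot\|_A$ and $\|\cdot\|_2$.
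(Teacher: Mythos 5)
Your proposal is correct and follows essentially the same route as the paper: part 1 via mass conservation of the column-stochastic augmented matrix $\bm{\hat P}$, Perron--Frobenius, and the final-value/minimal-polynomial identity; part 2 via the observation that $y_i(t)$ is the common exact average gradient, the autonomous contraction of the consensus error under $\mathbf{A}-\mathbf{1}_{np}u^\top/n$ (Lemma \ref{lemma4}), and a perturbed gradient-descent bound on the Perron-weighted average. The only differences are cosmetic --- you close with a scalar recursion plus a geometric forcing term where the paper assembles an upper-triangular $2\times 2$ transition matrix $\bm{M}$, and you quote the contraction factor $\sqrt{1-2\eta\mu L/(\mu+L)}$ where the paper uses $1-\eta\mu$; both yield the same linear rate conclusion.
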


	\begin{proof}
		See Appendix \ref{pt1}.
	\end{proof}
					
\subsection{Privacy-preserving Performance Analysis}\label{pra}
		In this subsection, we analyze the privacy-preserving performance of   Algorithm \ref{alg2}  against honest-but-curious nodes and eavesdroppers. First, the   following assumption is needed.
		
		\begin{assumption}\label{asp3_1}
			Considering a  digraph  $\mathcal{G}(\mathcal{V}, \mathcal{E})$, each agent $i, \forall i \in \mathcal{V}$, does not know the structure of the whole network, i.e., the Laplacian of the network. 
		\end{assumption}
		
This assumption shows that 	agent $i$ has no access to the whole consensus dynamics in \eqref{e1}, which is very natural in distributed systems since agent $i$ is only aware of  its outgoing link weights $p_{ji}(k), j\in N_i^{-}\cup\{i\}$. Without other agents'  weights,   matrix $\bm{\hat P}$ in $\eqref{e2}$ is inaccessible to agent $i$.

		Note that  only local gradient information is exchanged in Algorithm \ref{alg1}, and  the outputs of Algorithm \ref{alg1} are the same for each agent. Hence, if Algorithm \ref{alg1} is able to preserve privacy of each agent in the network, we can deduce that Algorithm \ref{alg2} can preserve privacy of each agent.

%
%

Next, we show the privacy preservation  of Algorithm \ref{alg1}. 
		
		Under Algorithm \ref{alg1}, the information set accessible to the set of honest-but-curious nodes $\mathcal{N}$ at time $k$ can be defined as 
		 \begin{equation*}\label{info}
		\begin{aligned}
			\mathcal{I}_\mathcal{N}(k)= \{&x_{a,l}^\alpha(k), x_{a,l}^\beta(k),p_{ja}(k), p_{ap}(k), x_{p,l}^\alpha(k),\\&\mid p\in N_a^{-},a\in\mathcal{N}, j\in \mathcal{V}, l=1,2\}.
		\end{aligned}
		\end{equation*}

		Similar, an eavesdropper $\mathcal{R}$ is assumed to  eavesdrop some edges $\varepsilon_{ij}\in \mathcal{E}_{\mathcal{R}}$ and  its information set
		 is denoted by
	$$
			\mathcal{I}_\mathcal{R}(k)\triangleq \{ x_{j,l}^\alpha(k), p_{ij}(k)\mid \forall \varepsilon_{ij}\in \mathcal{E}_{\mathcal{R}}, j\in \mathcal{V}, l=1,2\}.
	$$

		\begin{theorem}\label{t2}
			Under Assumptions \ref{asp1} and \ref{asp3_1}, for each node $j \in\mathcal{V}$, under Algorithm \ref{alg1}, the privacy of agent $j$  can be preserved:
			
			1) Against a   set of honest-but-curious nodes $\mathcal{N}$  if at least one neighbor of node $j$ does not belongs to $\mathcal{N}$, i.e., $N_j^{+} \cup N_j^{-}\nsubseteq \mathcal{N}$. 
			
			2) Against eavesdropper $\mathcal{R}$ {\color{black}if there exists one edge $\varepsilon_{mj}$ or $\varepsilon_{jm}$ that eavesdropper $\mathcal{R}$ cannot eavesdrop, where $m\in N_j^+$ or $m\in N_j^-$.}
			\end{theorem}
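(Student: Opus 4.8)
My plan is to show, for each adversary model, that $\mathrm{Diam}\{\Delta\mathcal{I}_\mathcal{A}(x_{p,j})\}=\infty$ by constructing, for a prescribed alternative value $\bar x_{p,j}$ arbitrarily far from $x_{p,j}$, a valid run of Algorithm~\ref{alg1} in which agent $j$'s private value is $\bar x_{p,j}$ while the adversary's information set $\mathcal{I}_\mathcal{A}(0:K)$ is identical to the one produced by the true run; since such $\bar x_{p,j}$ can be taken unboundedly far away, Definitions~\ref{dfprivacy}--\ref{pr} then give the conclusion. By the reduction recorded just before the theorem---inside Algorithm~\ref{alg1} only gradient-valued inputs are exchanged, and its output is common to all agents and so reveals nothing beyond the aggregate---it suffices to argue for Algorithm~\ref{alg1}, with $x_{p,j}$ in the role of the private initial value $x_j(0)$; I also take $j$ itself to be honest (resp.\ to have an incident edge not in $\mathcal{E}_\mathcal{R}$), since otherwise there is nothing to protect. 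Assumption~\ref{asp3_1} is what legitimizes the stated forms of $\mathcal{I}_\mathcal{N}(k)$ and $\mathcal{I}_\mathcal{R}(k)$: an agent knows only its own coupling weights, so weights incident solely to non-adversarial (resp.\ non-eavesdropped) agents---including self-loop weights---are absent from the adversary's view.

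The construction rests on two properties of the weight mechanism of Section~\ref{wm}. First, the hidden substate $x^\beta_{j,1}(0)$ is never transmitted, is tied to the secret only through $x^\alpha_{j,1}(0)+x^\beta_{j,1}(0)=2x_j(0)$, and---since $a^{\beta,\beta}_j(0)=0$ and $a^{\alpha,\beta}_j(0)=1$---enters the dynamics \emph{only} through $x^\alpha_{j,1}(1)=\sum_i p_{ji}(0)x^\alpha_{i,1}(0)+x^\beta_{j,1}(0)$. Second, at $k=0$ the weights $p_{ji}(0)$ (the self-loop $p_{jj}(0)$ included) and $a^{\beta,\alpha}_i(0)$ are free apart from one stochasticity equation per node, whereas for $k\ge1$ the iteration is time-invariant. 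So I would fix a ``safe'' neighbour $\ell$ of $j$---for part 1) any $\ell\in(N_j^+\cup N_j^-)\setminus\mathcal{N}$, for part 2) the other endpoint of the un-eavesdropped incident edge---and perturb only a few quantities the adversary cannot see.

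Concretely (taking $\ell\in N_j^-$; the case $\ell\in N_j^+$ is symmetric, with the two self-loops playing swapped roles), write $\delta$ for the intended increment of $x_j(0)$, chosen in the direction of $x^\alpha_{\ell,1}(0)$ when $p>1$, and set: (i) $\bar x^\beta_{j,1}(0)=x^\beta_{j,1}(0)+2\delta$, which realizes $\bar x_{p,j}$ and would shift $x^\alpha_{j,1}(1)$ by $2\delta$; (ii) $\bar p_{j\ell}(0)=p_{j\ell}(0)-2\delta/x^\alpha_{\ell,1}(0)$, cancelling that shift; (iii) $\bar p_{\ell\ell}(0)=p_{\ell\ell}(0)+2\delta/x^\alpha_{\ell,1}(0)$, restoring $\ell$'s stochasticity constraint through its self-loop---crucially \emph{not} through $a^{\beta,\alpha}_\ell(0)$; (iv) $\bar x^\beta_{\ell,1}(0)=x^\beta_{\ell,1}(0)-2\delta$, cancelling the shift in $x^\alpha_{\ell,1}(1)$ induced by (iii). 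One then checks that every altered quantity lies outside $\mathcal{I}_\mathcal{A}(0:K)$ (the two $\beta$-substates are never sent; $p_{j\ell}(0)$ and $p_{\ell\ell}(0)$ involve only $j$ and $\ell$), that the states of $j$ and $\ell$ at $k=1$ are unchanged---hence so are all later states, all transmitted $\alpha$-substates, and all other nodes' trajectories---and that $\sum_i x_i(0)$ is unchanged because (iv) incidentally shifts $x_\ell(0)$ by $-\delta$, so the common output is also preserved. The new run therefore reproduces $\mathcal{I}_\mathcal{A}(0:K)$ exactly, and letting $|\delta|\to\infty$ gives $\mathrm{Diam}\{\Delta\mathcal{I}_\mathcal{A}(x_{p,j})\}=\infty$.

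I expect the crux to be not any calculation but the \emph{choice of compensating variables} that prevents a cascade: the stochasticity constraint must be restored via a self-loop (or an edge to a safe node) rather than via $a^{\beta,\alpha}(0)$, since the latter would leak the perturbation into $x^\beta_{\ell,1}(1)$ and thence into transmitted $\alpha$-substates, propagating it through the strongly connected graph; and one must use that, with $a^{\beta,\beta}(0)=0$, a hidden substate meets the transmitted data in exactly one place. This is precisely where the topological hypotheses ($\ell\notin\mathcal{N}$, resp.\ an un-eavesdropped incident edge) and the weight mechanism are needed, and it is what makes the extra degree of freedom available for arbitrary $\delta$. A minor technical point is the measure-zero set of initializations with $x^\alpha_{\ell,1}(0)=0$ (or $x^\alpha_{j,1}(0)=0$ in the $\ell\in N_j^+$ case), where the multiplicative correction is undefined; this is dispatched by a slightly different compensation (also perturbing $x^\alpha_{\ell,1}(0)$ and re-balancing $\ell$'s downstream) or by a continuity argument.
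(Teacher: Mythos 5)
Your proposal is correct and follows essentially the same route as the paper's own proof: reduce to protecting $x_j(0)$ in Algorithm~\ref{alg1}, perturb the hidden substates $x^\beta_{j,1}(0)$ and $x^\beta_{\ell,1}(0)$ by $\pm 2e$, and compensate through the unobserved weights $p_{j\ell}(0)$ (or $p_{\ell j}(0)$) and the self-loop $p_{\ell\ell}(0)$ (or $p_{jj}(0)$) so that all states from $k=1$ onward, and hence the adversary's information set, are unchanged while $e$ is arbitrary. Your added remarks on why the compensation must avoid $a^{\beta,\alpha}(0)$ and on the measure-zero case $x^\alpha_{\ell,1}(0)=0$ go slightly beyond what the paper writes, but the construction is the same.
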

						
			\begin{proof}
					See Appendix \ref{pt2}.			
			\end{proof}

		\section{SIMULATIONS}
		Consider a strongly connected digraph containing $n=5$ agents. The following distributed least squares problem is considered:
		
		\begin{equation}\label{pro2}
			\min\limits_{x\in\mathbb{R}^p} F(x) =\frac{1}{{\color{black}n}}\sum\limits_{i=1}^n f_i(x)=\frac{1}{5}\sum\limits_{i=1}^5 ||A_i x-b_i||^2,\end{equation}
		where $A_i \in\mathbb{R}^{q\times p}$ is only known to node $i$, $b_i \in \mathbb{R}^{q}$ is the measured data and $x \in \mathbb{R}^{p}$ is the common decision variable. In this simulation,  we set $q=p = 3$ and  $\eta=0.1$.  All elements of $A_i$ and $b_i$ are set from independent and identically distributed  samples of standard normal distribution $\mathcal{N}(0, 1)$. The finite-time consensus (Algorithm 1) stage consists of $k_1=64$ $(t=0)$ and $k_{\text{max}}=17$ $(t\ge 1)$  communication steps inside each PrFTPS-GD optimization iteration, i.e., the optimization  variable ${\bm x(0)}$ takes $64$ steps to become $\bm x(1)$ and then variable ${\bm x(t)}$ is updated every $17$ iterations for $t\ge1$.
	\begin{figure}[t]
			\centering
			\includegraphics[width=0.5\textwidth]{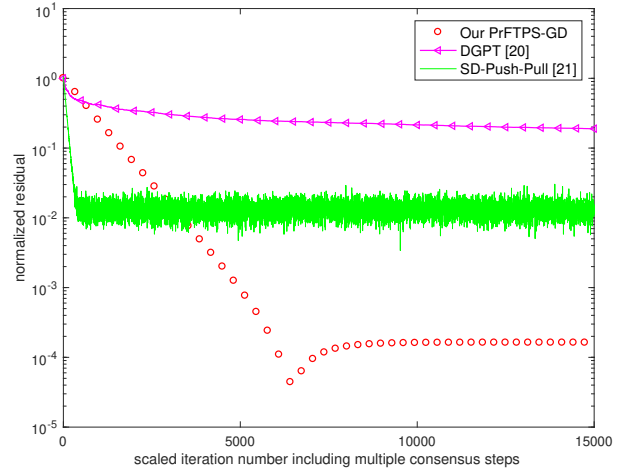}
			\caption{Performance comparison among  our proposed PrFTPS-GD,    DPGT \cite{wang2022tailoring} and  SD-Push-Pull \cite{chen2021differential}. }
			\label{comp}
		\end{figure}

		The normalized residual $\sum_i^5(||x_i(t)-x^\star||/||x_i(0)-x^\star||)$ is illustrated in  Fig. \ref{comp} to compare  PrFTPS-GD with DPGT\cite{wang2022tailoring} and  SD-Push-Pull \cite{chen2021differential}. Notice that as we have multiple consensus steps in Algorithm \ref{alg1} inside our PrFTPS-GD while there is only one step in DPGT  and  SD-Push-Pull, we have scaled each PrFTPS-GD optimization iteration number to include the consensus number (i.e., $k_1$ and $k_{\text{max}}$) directly.  It is shown that the proposed PrFTPS-GD converges linearly related to optimization iteration number.  The stepsizes of all algorithms are manually tuned to obtain the corresponding best convergence performance. For SD-Push-Pull,  the  stepsize is set to be $\eta=0.1$ and it can be seen that SD-Push-Pull only converge to suboptimality while PrFTPS-GD can converge to the optimal point. In terms of DPGT, we  choose the  stepsize with the diminishing sequence as $$
			\lambda^k=\frac{0.02}{1+0.1k}, \quad \gamma_1^k=\frac{1}{1+0.1k^{0.9}}, \quad \gamma_2^k=\frac{1}{1+0.1k^{0.8}}.
		$$ Fig. \ref{comp} demonstrates clearly   that  PrFTPS-GD converges faster to the optimal solution than DPGT.

		\section {CONCLUSION AND FUTURE WORK}
		In this paper, a privacy-preserving  finite-time push-sum based gradient descent algorithm is proposed to solve  the distributed optimization problem over a directed graph. Compared to  existing privacy-preserving algorithms in the literature, the proposed one can converge  linearly  to the global optimum. Moreover, privacy of each agent is preserved via a state decomposition mechanism.  
		
		Future work includes   considering constrained optimization problems in  large-scale. Moreover, privacy-preserving distributed optimiation algorithm with quantization communication is a potential research direction.

%
\appendix
 	\subsection{Proof of Theorem \ref{t1}}\label{pt1}
 	
		1) By the weight mechanism  and the iteration in \eqref{e1} at iteration $k=0$, we have $\bm{1}^\top \bm{x_1}(1)=\bm{1}^\top \bm{x_1}(0)=2\sum_{i=1}^n x_i(0)$ and
					$\bm{1}^\top \bm{x_2}(1)=\bm{1}^\top \bm{x_2}(0)=2n. $


			Since $\bm{\hat P}$ is irreducible, column-stochastic with positive diagonals, from  Perron-Frobenius theorem, we have $\rho(\bm{\hat P})=1$. Denote $\bm{v}=[v_i]$ as the right eigenvector corresponding to the eigenvalue of $1$, we have $\lim_{k\rightarrow \infty}\bm{\hat P}^k=\bm{v1}^\top$. 
			Then,  
			\begin{equation*} 
\begin{aligned}\label{cov1}
					\frac{x^\alpha_{j,1}(\infty)}{ x^\alpha_{j,2}(\infty)} &=\frac{[\bm{\hat P}^{\infty}\bm{x_1}({ 1})]_j}{[\bm{\hat P}^{\infty}\bm{x_2}({ 1})]_j}=\frac{[\bm{v1}^\top\bm{x_1}({ 1})]_j}{[\bm{v1}^\top\bm{x_2}({ 1})]_j}=\frac{\sum_{i=1}^n x_i(0)}{n}.\\	
				\end{aligned}
			\end{equation*}	 
			Since the coefficient $\bm{\beta}_j$ is independent of the initial node state, 
			\begin{equation}
				\hat x_j^{ave}=\frac{(x_{1,{D_j}}^\alpha)^\top \bm{\beta}_j}{(x_{2,{D_j}}^\alpha)^\top \bm{\beta}_j}=\frac{x^\alpha_{j,1}(\infty)}{ x^\alpha_{j,2}(\infty)} =\frac{\sum_{i=1}^n x_i(0)}{n} .
			\end{equation} 
			
			{\color{black}2) Denote $\bar x(t)=u^\top \bm{x}(t)/n, \bar y(t)=\frac{1}{n}\sum_{i=1}^n \nabla f_i( x_{i}(t))$ and $g(t)=\frac{1}{n}\sum_{i=1}^n\nabla f_i(\bar x(t))$.
			From the above analysis, we know that at time iteration $t$, each agent can obtain the average gradient at time $t$ via Algorithm \ref{alg1}, i.e, $y_i(t)=\bar y(t), \bm{y}(t)=\bm{1} \bar y(t)$. Hence, from iteration \eqref{eq:op}, we have
			\begin{equation*}
			\begin{aligned}
				\bar x(t+1)-x^\star&=\bar x(t)-\eta \bar y(t)-x^\star\\
&=\bar x(t)-\eta g(t)-x^\star-\eta(\bar y(t)-g(t)),\\
			\end{aligned}
							\end{equation*}
		\begin{equation*}
			\begin{aligned}
\bm{x}(t+1)-\bm{1}\bar x(t+1)&=\mathbf{A}\bm{x}(t)-\eta \bm{y}(t)-\bm{1}\bar x(t)+\eta \bm{1}\bar y(t)\\
&=(\mathbf{A}-\mathbf{1}_n u^\top/n)(\bm{x}(t)-\bm{1}\bar x(t)).\\
			\end{aligned}
							\end{equation*}

		{\color{black}Based on [30, Lemma 8(c) and 10]}, if $\eta< 1/(\mu+L)$, we have 
	\begin{equation}
		\begin{aligned}
			&||\bar x(t+1)-x^\star||_2  \le (1-\eta\mu)||\bar x(t)-x^\star||_2+\eta||\bar y(t)-g(t)||\\
			&\le (1-\eta\mu)||\bar x(t)-x^\star||_2+\frac{\eta L}{\sqrt{n}}||\bm{x}(t)-\bm{1}\bar x(t)||_2.\\
		\end{aligned}
	\end{equation}
{\color{black}Moreover, from the result of Lemma \ref{lemma4}, we can obtain that 
$$||\bar x(t+1)-x^\star||_2  \le  (1-\eta\mu)||\bar x(t)-x^\star||_2+\frac{\eta qL}{\sqrt{n}}||\bm{x}(t)-\bm{1}\bar x(t)||_A.
$$}
	
	{\color{black}Denote $V(t)=[||\bar x(t+1)-x^\star||_2,  ||\bm{x}(t)-\bm{1}\bar x(t)||_A]^\top$}, we have 
	\begin{equation}
		V(t+1)\le \bm{M} V(t),
	\end{equation}
	where the transition matrix $\bm{M}=\begin{bmatrix}
		1-\eta\mu & {\color{black}\eta} q L /\sqrt{n}\\
		0 & \sigma_A\\
	\end{bmatrix}$.
	
	Since $0<1-\eta\mu<1$ and $0<\sigma_A<1$, we can obtain that the spectral radius of $\bm{M}$ is strictly less than $1$ and therefore $||\bm{x}(t)-\bm{1}\otimes x^\star||_2$ converges to zero linearly.	}

	 	\subsection{Proof of Theorem \ref{t2}}\label{pt2}
	
	Since under Algorithm \ref{alg1}, the maximum communication round  equals to $k_1$, the information set $\mathcal{I}_\mathcal{N}(0:k_1)$ and $\mathcal{I}_\mathcal{R}(0:k_1)$ denote all the information accessible to the adversary. {\color{black}From  Algorithm \ref{alg2}, it can be seen that the private information $\nabla f_j(x_j(t)), \forall t\ge 0$ is regarded as the input of Algorithm \ref{alg1}. Hence, it suffices to prove that the privacy of initial value of agent $j$,   $x_j(0)$ is preserved under Algorithm \ref{alg1}. }
				
				1) Denote $\mathcal{L}=\mathcal{V}\backslash \mathcal{A}$ as the set of legitimate nodes. Since $N_j^{+} \cup N_j^{-}\nsubseteq \mathcal{A}$, there exists at least one node $m$ that belongs to $N_j^{+} \cup N_j^{-}$ but not $\mathcal{A}$. Fix any  feasible information set $\mathcal{I}_\mathcal{N}(0:k_1)$. We denote {\color{black} $\{x_{n,1}^\alpha(0)', x_{n,1}^\beta(0)',p_{in}(0)'\mid n\in \mathcal{L}, i\in \mathcal{V}\}$ }as an arbitrary set of initial substate values and  weights that satisfies $\mathcal{I}_\mathcal{N}(0:k_1)$. Hence, we have  $x_j(0)'=(x_{j,1}^\alpha(0)'+x_{j,1}^\beta(0)')/2.$ We then denote $x_j(0)''$ as $x_j(0)''=x_i(0)'+e$, where $e$ is an arbitrary real number.
				Next we show that there exists a set of values $\{x_{n,1}^\alpha(0)'', x_{n,1}^\beta(0)'', p_{in}(0)'', n \in \mathcal{L}, i\in \mathcal{V}\}$ which makes $x_j(0)''\in \Delta_j(\mathcal{I}_\mathcal{N}(0:k_1)).$ 
				The initial substate values $x_{n,1}^\alpha(0)'', x_{n,1}^\beta(0)''$ are denoted as follows.
			
				\begin{equation}\label{sub}   
					\begin{aligned}		
					&x_{q,1}^\alpha(0)''= x_{q,1}^\alpha(0)', x_{q,1}^\beta(0)''= x_{q,1}^\beta(0)',\forall q\in  \mathcal{L} \backslash {\color{black}\{j,m\}},\\
						&x_{m,1}^\alpha(0)''= x_{m,1}^\alpha(0)', x_{m,1}^\beta(0)''= x_{m,1}^\beta(0)'-2e,\\
									&x_{j,1}^\alpha(0)''= x_{j,1}^\alpha(0)',  x_{j,1}^\beta(0)''= x_{j,1}^\beta(0)'+2e.\\	\end{aligned}   		
				\end{equation}
				Then we consider the following two situations.
				
				Situation \uppercase\expandafter{\romannumeral 1}: Consider $m\in N^{-}_j$,  {\color{black}then the information set sequence accessible to set $\mathcal{N}$ equals to $\mathcal{I}_\mathcal{N}(0:k_1)$  under the initial substate values in (\ref{sub}) and the following weights:}
				
				\begin{equation}\label{e4}		
					\begin{aligned}
						&a_n^{(\beta,\alpha)} (0)''=a_n^{(\beta,\alpha)} (0)',\forall n\in \mathcal{L}, \\
						& p_{mm}(0)''=(p_{mm}(0)'x_{m,1}^\alpha(0)'+2e)/x_{m,1}^\alpha(0)',\\
						&p_{jm}(0)''=(p_{jm}(0)'x_{m,1}^\alpha(0)'-2e)/x_{m,1}^\alpha(0)',\\
						&p_{qm}(0)''= p_{qm}(0)' , \forall q\in \mathcal{L}\backslash{\color{black}\{j,m\}}, \\
						&p_{np}(0)''=p_{np}(0)', \forall n\in \mathcal{L},\forall p \in \mathcal{L}\backslash\{m\}.
					\end{aligned}	
				\end{equation}		
				
				Situation \uppercase\expandafter{\romannumeral 2}: Consider $m\in N^{+}_j$, then the information set sequence accessible to set $\mathcal{N}$ equals to $\mathcal{I}_\mathcal{N}(0:k_1)$ under the initial substate values in (\ref{sub}) and the following weights:
			
				\begin{equation}\label{e5}
					\begin{aligned}
						&a_n^{(\beta,\alpha)} (0)''=a_n^{(\beta,\alpha)} (0)',\forall n\in \mathcal{L}, \\
						&p_{jj}(0)''=(p_{jj}(0)'x_{j,1}^\alpha(0)'-2e)/x_{j,1}^\alpha(0)',\\
						& p_{mj}(0)''=(p_{mj}(0)'x_{j,1}^\alpha(0)'+2e)/x_{j,1}^\alpha(0)',\\
						&  p_{qj}(0)''= p_{qj}(0)' , \forall q\in \mathcal{L}\backslash{\color{black}\{j,m\}}, \\
						&p_{np}(0)''=p_{np}(0)', \forall n\in \mathcal{L},\forall p \in \mathcal{L}\backslash\{j\}.
					\end{aligned}	
				\end{equation}		
				
				Summarizing Situations \uppercase\expandafter{\romannumeral 1} and \uppercase\expandafter{\romannumeral 2}, we have that $ x_j(0)''=x_j(0)'+e\in \Delta_j(\mathcal{I}_\mathcal{N}(0:k_1))$, then	
				$$\text{Diam}(\Delta_j(\mathcal{I}_\mathcal{N}(0:k_1)))\geq \sup\limits_{e\in\mathbb{R}} |x_j(0)'-(x_j(0)'+e)| =\infty.	  $$ 
				From Definition \ref{dfprivacy}, the first statement is  proved. 
				
				2) For the second statement, due to the topology constraints,   {\color{black}the eavesdropper $\mathcal{R}$ can not eavesdrop on $\varepsilon_{mj}$ or $\varepsilon_{jm}$, where $m\in N_j^+$ or $m\in N_j^-$, i.e., $p_{mj}(0)$ or $p_{jm}(0)$ is inaccessible to $\mathcal{R}$. Moreover,  since the self-weights $p_{jj}(0), p_{mm}(0)$ are not transmitted over the communication network, the eavesdropper $\mathcal{R}$ can not  obtain any information of $p_{jm}(0),  p_{mm}(0)$ or $p_{mj}(0), p_{jj}(0))$.} Then, similar to the proof in the first statement, we have $\text{Diam}(\Delta_j(\mathcal{I}_\mathcal{R}(0:k_1)))=\infty$, i.e., the privacy of $x_j(0)$ is preserved. 
\balance

	\end{document}